\documentclass{article}

\usepackage{amssymb}
\usepackage{amsmath}
\usepackage{amsthm}
\usepackage{a4wide}

% --------------------------------------------------

\newtheorem{theorem}{Theorem}
\newtheorem{corollary}{Corollary}

\newtheorem{ex}{Example}
\theoremstyle{remark}
\newtheorem{remark}{Remark}

\theoremstyle{definition}
\newtheorem{definition}{Definition}

% ------------------------------------------

\begin{document}

\title{The Diamond Integral on Time Scales\thanks{This is a preprint of a paper 
whose final and definite form will appear in the 
\emph{Bulletin of the Malaysian Mathematical Sciences Society}. 
Paper submitted 12-Feb-2013; revised 07-May-2013; accepted for publication 05-Jun-2013.}}

% -------------

\author{Artur M. C. Brito da Cruz$^{1, 2}$\\ {\sf artur.cruz@estsetubal.ips.pt}
\and
Nat\'{a}lia Martins$^{2}$\\ {\sf natalia@ua.pt}
\and
Delfim F. M. Torres$^{2}$\\ {\sf delfim@ua.pt}}

\date{$^{1}$Escola Superior de Tecnologia de Set\'{u}bal,
Estefanilha, 2910-761 Set\'{u}bal, Portugal\\[0.3cm]
$^{2}$CIDMA--Center for Research and Development in Mathematics and Applications,\\
Department of Mathematics, University of Aveiro, 3810-193 Aveiro, Portugal}

\maketitle

% -------------

\begin{abstract}
We define a more general type of integral on time scales.
The new diamond integral is a refined version of the diamond-alpha
integral introduced in 2006 by Sheng et al. A mean value theorem for
the diamond integral is proved, as well as versions of Holder's,
Cauchy-Schwarz's and Minkowski's inequalities.

\bigskip

\noindent {\bf Keywords:} diamond integral; time scales; integral inequalities.

\smallskip

\noindent {\bf 2010 Mathematics Subject Classification:} 26D15; 26E70.
\end{abstract}

% -----------------------------------------------

\section{Introduction}

In 1988, Hilger introduced the theory of time scales in his dissertation.
This intends to unify and extend existing continuous
and discrete calculus into a uniformed theory \cite{Hilger,Hilger:90}.
Since then, the time scale theory advanced fast, as seen by the number
of published works dedicated to the subject. Science Watch, from Thomson Reuters,
considered in October 2007, and later in February 2011,
the study of equations on time scales as an emerging research front in the field of mathematics
with ``potential applications in such areas as engineering, biology, economics and finance,
physics, chemistry, social sciences, medical sciences, mathematics education, and others''.
For a good introduction to the theory of time scales, we refer
the reader to the comprehensive books \cite{Bohner_1,Bohner_2}.

The delta/forward calculus and the nabla/backward calculus
were the first approaches of the calculus on time scales. In 2006, a combined diamond-$\alpha$
dynamic derivative (resp. integral) was introduced by Sheng, Fadag, Henderson and Davis,
as a linear combination of the delta and nabla dynamic derivatives (resp. integral) on time
scales \cite{Sheng}. Sheng et al. showed that those diamond-$\alpha$ derivatives offer
more balanced approximations for computational applications.
Having in mind that the classical symmetric derivative is a generalization
of the classical derivative and that the classical symmetric difference quotient
is generally a more accurate approximation for the derivative than the usual one-sided quotient,
an important question consists to define a symmetric derivative on time scales,
as a generalization of the diamond-$\alpha$ derivative \cite{Brito_da_Cruz_3}.
On the other hand, the problem of determining a symmetric integral
is a topic that captured the interest of many important mathematicians along the history,
such as Denjoy, James, Kurzweil and Jarn\'{\i}k.
However, it is generally accepted that the integrals proposed only
``invert approximately'' the corresponding derivative \cite{Thomson}.
Despite this apparent limitation, these integrals have nice properties.
For example, they are useful to solve the coefficient problem for trigonometric series
when the conventional integral fails to exist \cite{Thomson}.
Our goal here is to define the diamond integral as an
``approximate'' symmetric integral on time scales (see Section~\ref{int}).

The text is organized as follows. In Section~\ref{pre} we present some preliminary results
and basic definitions necessary in the sequel. Namely, we briefly present the nabla and
the delta calculus \cite{Bohner_1,Bohner_2}. We also recall the notions of
diamond-$\alpha$ derivative and integral, as a linear combination of the delta
and nabla derivatives and integral, respectively \cite{Sheng,Sheng_2,Sheng_3}.
In Section~\ref{int} we introduce our diamond integral,
derive some of its properties, and prove some integral inequalities.
We end with Section~\ref{sec:coc} of conclusion.

% -----------------------------------------------

\section{Preliminaries}
\label{pre}

A nonempty closed subset of $\mathbb{R}$ is called
a time scale and is denoted by $\mathbb{T}$. We assume that a time scale
has the topology inherited from $\mathbb{R}$ with the standard topology.
Two jump operators are considered: the forward jump operator
$\sigma : \mathbb{T} \rightarrow \mathbb{T}$, defined by
$\sigma \left( t\right) :=\inf \left \{ s\in \mathbb{T}:s>t\right \}$
with $\inf \emptyset =\sup \mathbb{T}$ (i.e., $\sigma \left( M\right) =M$ if
$\mathbb{T}$ has a maximum $M$), and the backward jump operator
$\rho :\mathbb{T} \rightarrow \mathbb{T}$ defined by
$\rho \left( t\right) :=\sup \left \{ s\in \mathbb{T}:s<t\right \}$
with $\sup \emptyset =\inf \mathbb{T}$ (i.e.,
$\rho \left( m\right) =m$ if $\mathbb{T}$ has a minimum $m$).
If $\sup\mathbb{T}$ is finite and left-scattered, then we define
$\mathbb{T}^\kappa := \mathbb{T}\setminus \{\sup\mathbb{T}\}$,
otherwise $\mathbb{T}^\kappa :=\mathbb{T}$;
if $\inf\mathbb{T}$ is finite and right-scattered, then
$\mathbb{T}_\kappa := \mathbb{T}\setminus\{\inf\mathbb{T}\}$,
otherwise $\mathbb{T}_\kappa := \mathbb{T}$. We set
$\mathbb{T}_{\kappa}^{\kappa}:=\mathbb{T}_{\kappa}\cap \mathbb{T}^{\kappa}$.

\begin{definition}[\protect \cite{Bohner_1}]
We say that a function $f:\mathbb{T} \rightarrow \mathbb{R}$
is delta differentiable at $t\in \mathbb{T}^{\kappa}$ if there exists a
number $f^{\Delta }\left( t\right) $ such that, for all $\varepsilon >0$,
there exists a neighborhood $U$ of $t$ such that
\begin{equation*}
\left \vert f\left( \sigma \left( t\right) \right) -f\left( s\right)
-f^{\Delta }\left( t\right) \left( \sigma \left( t\right) -s\right) \right
\vert \leqslant \varepsilon \left \vert \sigma \left( t\right) -s\right\vert
\end{equation*}
for all $s \in U$. We call $f^{\Delta }\left( t\right)$
the delta derivative
of $f$ at $t$ and we say that $f$ is delta differentiable
if $f$ is delta differentiable for all $t\in \mathbb{T}^{\kappa}$.
\end{definition}

\begin{definition}[\protect \cite{Bohner_1}]
We say that a function $f:\mathbb{T} \rightarrow \mathbb{R}$
is nabla differentiable at $t\in \mathbb{T}_{\kappa}$ if there exists a
number $f^{\nabla }\left( t\right) $ such that, for all $\varepsilon >0$,
there exists a neighborhood $V$ of $t$ such that
\begin{equation*}
\left \vert f\left( \rho \left( t\right) \right) -f\left( s\right)
-f^{\nabla }\left( t\right) \left( \rho \left( t\right) -s\right) \right
\vert \leqslant \varepsilon \left \vert \rho \left( t\right) -s\right \vert
\end{equation*}
for all $s\in V$. We call $f^{\nabla }\left( t\right)$
the nabla derivative
of $f$ at $t$ and we say that $f$ is nabla differentiable
if $f$ is nabla differentiable for all $t\in \mathbb{T}_{\kappa}$.
\end{definition}

\begin{remark}
If $\mathbb{T} = \mathbb{R}$, then $f^\Delta= f^\nabla=f^\prime$,
where $f^\prime$ denotes the usual derivative on $\mathbb{R}$.
If $\mathbb{T} = \mathbb{Z}$, then $f^\Delta (t)= f(t+1)-f(t)$
and $f^\nabla (t)= f(t)-f(t-1)$, i.e.,  $f^\Delta$ and $f^\nabla$ are,
respectively, the usual forward and backward difference operators.
For any time scale $\mathbb{T}$, if $f$ is a constant function,
then $f^\Delta = f^\nabla \equiv 0$; if $f(t) = kt$ for some constant $k$,
then $f^\Delta = f^\nabla \equiv k$.
\end{remark}

Let $a,b \in \mathbb{T}$, $a<b$. In what follows we denote
$\left[a,b\right]_{\mathbb{T}}:=\{t \in \mathbb{T}: a \leq t \leq b\}$.

\begin{definition}[\protect \cite{Bohner_1}]
A function $F:\mathbb{T}\rightarrow \mathbb{R}$ is said to be a
delta antiderivative of $f:\mathbb{T}\rightarrow \mathbb{R}$, provided
$F^{\Delta }\left( t\right) =f\left( t\right)$
for all $t\in \mathbb{T}^{\kappa}$. For all $a,b\in\mathbb{T}$, $a<b$,
the delta integral of $f$ from $a$ to $b$
(or on $\left[a,b\right]_{\mathbb{T}}$) is defined by
\begin{equation*}
\int_{a}^{b}f\left( t\right) \Delta t=F\left( b\right) -F\left( a\right).
\end{equation*}
\end{definition}

\begin{definition}[\protect \cite{Bohner_1}]
A function $G:\mathbb{T} \rightarrow \mathbb{R}$
is said to be a nabla antiderivative
of $g:\mathbb{T} \rightarrow \mathbb{R}$, provided
$G^{\nabla }\left( t\right) =g\left( t\right)$
for all $t\in \mathbb{T}_{{k}}$. For all $a,b\in\mathbb{T}$, $a<b$,
the nabla integral of $g$ from $a$ to $b$ (or on $\left[a,b\right]_{\mathbb{T}}$)
is defined by
\begin{equation*}
\int_{a}^{b}g\left( t\right) \nabla t=G\left( b\right) -G\left( a\right).
\end{equation*}
\end{definition}

For the properties of the delta and nabla integrals
we refer the readers to \cite{Bohner_1,Bohner_2}.

\begin{remark}
If $\mathbb{T}=\mathbb{R}$, then
$\displaystyle \int_{a}^{b}f\left( t\right) \Delta t
= \int_{a}^{b}f\left( t\right) \nabla t
=\int_{a}^{b}f\left( t\right) dt$,
where the last integral is the usual Riemman integral.
If $\mathbb{T}= h\mathbb{Z}$, for some $h>0$,
and $a,b \in \mathbb{T}$, $a<b$, then
$\displaystyle \int_{a}^{b}f\left( t\right) \Delta t
=\sum_{k=\frac{a}{h}}^{\frac{b}{h}-1} hf\left( k h\right)$
and $\displaystyle \int_{a}^{b}f\left( t\right) \nabla t
=\sum_{k=\frac{a}{h}+1}^{\frac{b}{h}}hf\left( k h\right)$.
\end{remark}

To provide a shorthand notation, for a function $f:\mathbb{T}\rightarrow\mathbb{R}$
we let $f^{\sigma}(t):=f(\sigma(t))$ and $f^{\rho}(t):=f(\rho(t))$.

\begin{definition}[\protect \cite{Sheng_2}]
Let $t,s\in \mathbb{T}$ and define $\mu_{ts}:=\sigma \left( t\right) -s$
and $\eta_{ts}:=\rho \left( t\right) -s$.
We say that a function $f:\mathbb{T}\rightarrow\mathbb{R}$
is diamond-$\alpha$ differentiable at
$t\in\mathbb{T}_{\kappa}^{\kappa}$ if there exists a number
$f^{\diamondsuit _{\alpha }}\left(t\right)$
such that, for all $\varepsilon >0$, there exists
a neighborhood $U$ of $t$ such that, for all $s\in U$,
\begin{equation*}
\left \vert \alpha \left[ f^{\sigma }\left( t\right) -f\left( s\right) \right]
\eta _{ts}+\left( 1-\alpha \right) \left[ f^{\rho }\left( t\right) -f\left(
s\right) \right] \mu _{ts}-f^{\diamondsuit_{\alpha }}\left(t \right)
\mu_{ts}\eta_{ts}\right \vert \leqslant \varepsilon \left \vert \mu _{ts}\eta_{ts}\right \vert .
\end{equation*}
A function $f$ is said to be diamond-$\alpha$ differentiable provided
$f^{\diamondsuit _{\alpha }}\left(t\right)$ exists for all
$t\in \mathbb{T}_{\kappa}^{\kappa}$.
\end{definition}

\begin{theorem}[\protect \cite{Sheng_2}]
Let $0\leqslant \alpha \leqslant 1$ and let $f$ be both nabla and delta
differentiable at $t\in \mathbb{T}_{\kappa}^{\kappa}$. Then $f$ is
diamond-$\alpha$ differentiable at $t$ and
\begin{equation}
\label{ts:2}
f^{\diamondsuit_{\alpha }}\left( t\right) = \alpha f^{\Delta }\left(
t\right) +\left( 1-\alpha \right) f^{\nabla }\left( t\right).
\end{equation}
\end{theorem}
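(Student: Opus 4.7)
The plan is to derive the bound in the diamond-$\alpha$ definition directly from the delta and nabla differentiability hypotheses by a straightforward algebraic decomposition. I would start from the quantity inside the absolute value in the diamond-$\alpha$ definition, substitute the candidate value $f^{\diamondsuit_\alpha}(t) = \alpha f^\Delta(t) + (1-\alpha) f^\nabla(t)$, and rearrange so that the delta-remainder and nabla-remainder appear as separate factors.

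Concretely, I would write
\begin{equation*}
\alpha [f^\sigma(t) - f(s)]\eta_{ts} + (1-\alpha)[f^\rho(t) - f(s)]\mu_{ts} - \bigl(\alpha f^\Delta(t) + (1-\alpha) f^\nabla(t)\bigr)\mu_{ts}\eta_{ts}
\end{equation*}
as
\begin{equation*}
\alpha\, \eta_{ts}\bigl[f^\sigma(t) - f(s) - f^\Delta(t)\mu_{ts}\bigr] + (1-\alpha)\, \mu_{ts}\bigl[f^\rho(t) - f(s) - f^\nabla(t)\eta_{ts}\bigr],
\end{equation*}
using the identity $\mu_{ts} = \sigma(t)-s$ and $\eta_{ts} = \rho(t)-s$ so that the inner brackets are exactly the quantities controlled by the delta and nabla definitions.

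Given $\varepsilon > 0$, I would apply the delta-differentiability of $f$ at $t$ with tolerance $\varepsilon/2$ (when $\alpha \neq 0$) to obtain a neighborhood $U_1$ on which $|f^\sigma(t)-f(s)-f^\Delta(t)\mu_{ts}| \leq (\varepsilon/2)|\mu_{ts}|$, and likewise the nabla-differentiability with tolerance $\varepsilon/2$ (when $\alpha \neq 1$) to get a neighborhood $U_2$ on which $|f^\rho(t)-f(s)-f^\nabla(t)\eta_{ts}| \leq (\varepsilon/2)|\eta_{ts}|$. Taking $U := U_1 \cap U_2$ and applying the triangle inequality to the decomposition above immediately yields
\begin{equation*}
\bigl|\alpha[f^\sigma(t)-f(s)]\eta_{ts} + (1-\alpha)[f^\rho(t)-f(s)]\mu_{ts} - f^{\diamondsuit_\alpha}(t)\mu_{ts}\eta_{ts}\bigr| \leq \varepsilon |\mu_{ts}\eta_{ts}|
\end{equation*}
for all $s \in U$, which is exactly the diamond-$\alpha$ condition.

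There is no real obstacle: the proof is essentially bookkeeping. The only subtle point is handling the boundary cases $\alpha = 0$ or $\alpha = 1$ (where one of the two differentiability hypotheses is trivially weighted out) and choosing the $\varepsilon$-splitting cleanly so that the constants $\alpha$ and $1-\alpha$ in front of the two remainder terms do not cause trouble; using $\varepsilon/2$ for each term (or even $\varepsilon$ directly, since $\alpha + (1-\alpha) = 1$) suffices.
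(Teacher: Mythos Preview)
Your argument is correct: the algebraic decomposition is exact, and the triangle-inequality estimate with the delta and nabla remainders gives precisely the diamond-$\alpha$ bound (indeed, since $\alpha + (1-\alpha) = 1$, you do not even need the $\varepsilon/2$ split). Note, however, that the paper does not supply its own proof of this statement---it is quoted as a preliminary result from Rogers and Sheng \cite{Sheng_2}---so there is nothing in the paper to compare your argument against; your proof is the standard one found in that reference.
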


\begin{remark}
If $\alpha =1$, then the diamond-$\alpha$ derivative reduces to
the delta derivative; if $\alpha=0$, then the diamond-$\alpha$
derivative coincides with the nabla derivative.
\end{remark}

\begin{remark}
The equality \eqref{ts:2} is given as the definition
of the diamond-$\alpha$ derivative in \cite{Sheng}.
\end{remark}

\begin{definition}[\protect \cite{Sheng_2}]
Let $a,b\in \mathbb{T}$, $a<b$, $h:\mathbb{T} \rightarrow \mathbb{R}$
and $\alpha \in \left[ 0,1 \right]$. The diamond-$\alpha$ integral
(or $\diamondsuit _{\alpha}$-integral) of $h$ from $a$ to $b$
(or on $\left[a,b\right]_\mathbb{T}$) is defined by
\begin{equation*}
\int_{a}^{b}h\left( t\right) \diamondsuit _{\alpha }t=\alpha
\int_{a}^{b}h\left( t\right) \Delta t+\left( 1-\alpha \right)
\int_{a}^{b}h\left( t\right) \nabla t,
\end{equation*}
provided $h$ is delta and nabla integrable on $\left[a,b\right]_\mathbb{T}$.
\end{definition}

For properties, results, and integral inequalities
concerning the diamond-$\alpha$ integral, we refer the
reader to \cite{Malinowska,Rui,Moz:Tor:09,MR2609350}
and references therein.

% -----------------------------------------------

\section{The diamond integral}
\label{int}

In the classical calculus, one can find several attempts to define a symmetric integral
--- see, e.g., \cite{Thomson,Thomson_2}. However, those integrals invert
only ``approximately'' the symmetric derivatives \cite{Thomson_2}.
In the quantum setting, symmetric integrals are available,
namely the $q$-symmetric integral --- see, e.g., \cite{Kac,Brito_da_Cruz_2}
---  and the Hahn symmetric integral, that inverts
the Hahn symmetric derivative \cite{Brito_da_Cruz_4}.
In the more general setting of time scales, the problem of
determining a symmetric integral is an interesting open question.
In particular, such integral would be new, and of great interest,
even in the classical case $\mathbb{T} = \mathbb{R}$.
Given the similarities and the advantages of the recent symmetric
derivative with respect to the diamond-$\alpha$ derivative \cite{Brito_da_Cruz_3},
we claim that the diamond integral here introduced bring us closer
to the construction of a genuine symmetric integral on time scales.

We borrow from \cite{Brito_da_Cruz_3} the real function
\begin{equation*}
\gamma\left( t\right) := \lim_{s\rightarrow t}\frac{\sigma \left(
t\right) -s}{\sigma \left( t\right) +2t-2s-\rho \left( t\right)},
\end{equation*}
which plays an important role in the definition of our diamond integral (Definition~\ref{m:d}).
Note that $\gamma$ is well defined, $0\leqslant\gamma\left( t\right)\leqslant 1$
for all $t\in \mathbb{T}$, and
$$
\gamma\left( t\right) =
\begin{cases}
\frac{1}{2} & \text{ if } t \text{ is dense}, \\
\frac{\sigma \left( t\right) -t}{\sigma \left( t\right)
-\rho \left(t\right)} & \text{ if } t \text{ is not dense}.
\end{cases}
$$

\begin{definition}[diamond integral]
\label{m:d}
Let $f:\mathbb{T\rightarrow \mathbb{R}}$ and $a,b\in \mathbb{T}$, $a<b$.
The diamond integral (or $\diamondsuit$-integral) of
$f$ from $a$ to $b$ (or on $[a,b]_{\mathbb{T}}$) is given by
\begin{equation*}
\int_{a}^{b}f\left( t\right) \diamondsuit t
:= \int_{a}^{b}\gamma\left(t\right) f\left( t\right) \Delta t
+\int_{a}^{b}\left(1-\gamma \left( t\right)\right) f\left( t\right) \nabla t,
\end{equation*}
provided $\gamma f$ is delta integrable and $(1-\gamma) f$ is nabla
integrable on $[a,b]_{\mathbb{T}}$ .
We say that the function $f$ is diamond integrable (or $\diamondsuit$-
integrable) on $[a,b]_{\mathbb{T}}$ if it is $\diamondsuit$-integrable for
all $a,b\in \mathbb{T}$.
\end{definition}

\begin{remark}
The $\diamondsuit$-integral coincides with the
$\diamondsuit_{\alpha }$-integral when the function $\gamma$
is constant and equal to $\alpha$. There are several important time scales
where this happens. For instance, when $\mathbb{T} = \mathbb{R}$
or $\mathbb{T} = h \mathbb{Z}$, $h>0$, the $\diamondsuit$-integral
is equal to the $\diamondsuit_{\frac{1}{2}}$-integral.
Since the fundamental theorem of calculus is not valid for the
$\diamondsuit_{\alpha}$-integral (see \cite{Sheng}),
it is clear that the fundamental theorem of calculus
is also not valid for the $\diamondsuit$-integral.
Hence, the diamond integral is not a genuine symmetric integral on time scales.
Despite this limitation, we show that the new integral satisfies some important properties.
\end{remark}

\begin{ex}
Let $f:\mathbb{Z}\rightarrow\mathbb{R}$ be defined by
$f\left( t\right) =t^{2}$. Then,
$$
\int_{0}^{2}f\left( t\right) \diamondsuit t
=\frac{1}{2} \int_{0}^{2}f\left( t\right) \Delta t
+\frac{1}{2}\int_{0}^{2}f\left(t\right) \nabla t
= \frac{1}{2}\sum_{t=0}^{1}f\left( t\right) +\frac{1}{2}\sum_{t=1}^{2}f\left( t\right)
= \frac{1}{2}\left( 0+1\right) +\frac{1}{2}\left( 1+4\right)=3.
$$
\end{ex}

\begin{ex}
Let $f:\left[ 0,1\right] \cup \left \{ 2,4\right \} \rightarrow \mathbb{R}$
be defined by $f\left( t\right) =1$. Then,
\begin{equation*}
\begin{split}
\int_{0}^{4}f\left( t\right) \diamondsuit t
&= \int_{0}^{1}1dt
+\int_{1}^{2}\gamma\left( t\right) \Delta t+ \int_{2}^{4}\gamma\left( t\right) \Delta t
+ \int_{1}^{2}(1-\gamma(t)) \nabla t + \int_{2}^{4}(1-\gamma(t)) \nabla t\\
&= 1+ \gamma(1)+2\cdot \gamma(2)+ (1-\gamma(2))+ 2\cdot(1-\gamma(4))=\frac{17}{3}.
\end{split}
\end{equation*}
Note that, in this example, and independently of the value of $\alpha$,
our diamond integral is different from the diamond-$\alpha$ integral:
$$
\int_{0}^{4}f\left( t\right) \diamondsuit_{\alpha }t
=\int_{0}^{1}f\left(t\right) \diamondsuit t+\alpha \int_{1}^{4}1\Delta t
+\left( 1-\alpha \right) \int_{1}^{4}1\nabla t
=1+3\alpha +\left( 1-\alpha \right) 3 = 4 \ne \frac{17}{3}.
$$
\end{ex}

The $\diamondsuit$-integral has the following properties.

\begin{theorem}
\label{ts:propriedades:integral}
Let $f,g:\mathbb{T}\rightarrow \mathbb{R}$ be $\diamondsuit$-integrable
on $\left[ a,b\right]_{\mathbb{T}}$. Let $c\in \left[a,b\right]_{\mathbb{T}}$
and $\lambda \in \mathbb{R}$. Then,
\begin{enumerate}
\item $\displaystyle \int_{a}^{a}f\left( t\right) \diamondsuit t=0$;

\item $\displaystyle \int_{a}^{b}f\left( t\right) \diamondsuit
t=\int_{a}^{c}f\left( t\right) \diamondsuit t+\int_{c}^{b}f\left( t\right)
\diamondsuit t$;

\item $\displaystyle \int_{a}^{b}f\left( t\right) \diamondsuit t=-
\displaystyle \int_{b}^{a}f\left( t\right) \diamondsuit t$;

\item $f+g$ is $\diamondsuit$-integrable on $\left[ a,b\right]_{\mathbb{T}}$ and
$\displaystyle \int_{a}^{b}\left( f+g\right) \left( t\right) \diamondsuit
t=\int_{a}^{b}f\left( t\right) \diamondsuit t+\int_{a}^{b}g\left( t\right)
\diamondsuit t$;

\item $\lambda f$ is $\diamondsuit$-integrable on $\left[ a,b\right]_{\mathbb{T}} $ and
$\displaystyle \int_{a}^{b}\lambda f\left( t\right) \diamondsuit t=\lambda
\int_{a}^{b}f\left( t\right) \diamondsuit t$;

\item $fg$ is $\diamondsuit$-integrable on $\left[ a,b\right]_{\mathbb{T}} $;

\item for $p>0$, $|f|^{p}$ is $\diamondsuit$-integrable on $\left[ a,b\right]_{\mathbb{T}}$;

\item if $f\left(t\right) \leqslant g\left( t\right)$
for all $t\in \left[ a,b\right]_{\mathbb{T}}$, then
$\displaystyle \int_{a}^{b}f\left( t\right) \diamondsuit t\leqslant \int_{a}^{b}g\left(
t\right) \diamondsuit t$;

\item $\left \vert f\right \vert $ is $\diamondsuit$-integrable
on $\left[ a,b\right]_{\mathbb{T}}$ and
$\displaystyle \left \vert \int_{a}^{b}f\left( t\right) \diamondsuit t\right \vert
\leqslant \int_{a}^{b}\left \vert f\left( t\right) \right \vert \diamondsuit t$.
\end{enumerate}
\end{theorem}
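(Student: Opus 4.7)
The plan is to reduce each of the nine properties to the corresponding known property of the $\Delta$- and $\nabla$-integrals via the defining decomposition
\[
\int_a^b f(t)\diamondsuit t = \int_a^b \gamma(t) f(t)\,\Delta t + \int_a^b \bigl(1-\gamma(t)\bigr) f(t)\,\nabla t,
\]
exploiting that $0\leq\gamma(t)\leq 1$ everywhere on $\mathbb{T}$ and that $\gamma(t)=1/2$ at every dense point.

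Items (1)--(5), (8) and (9) are essentially formal. Property (1) follows from $\int_a^a u\,\Delta t=\int_a^a u\,\nabla t=0$ applied to $u=\gamma f$ and $u=(1-\gamma)f$. Property (2) is obtained by splitting each of the two pieces at $c$ and regrouping the four resulting terms. Property (3) encodes the sign convention under endpoint swap, inherited term-by-term from $\Delta$ and $\nabla$. Properties (4) and (5) follow from linearity of the $\Delta$- and $\nabla$-integrals, after writing $\gamma(f+g)=\gamma f+\gamma g$ and $\gamma(\lambda f)=\lambda(\gamma f)$, and symmetrically with $1-\gamma$. Property (8) comes from the pointwise inequalities $\gamma f\leq\gamma g$ and $(1-\gamma)f\leq(1-\gamma)g$ (valid since $\gamma$ and $1-\gamma$ are nonnegative), monotonicity of the $\Delta$- and $\nabla$-integrals, and addition. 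Once item (7) with $p=1$ supplies the $\diamondsuit$-integrability of $|f|$, the estimate in (9) follows from $-|f|\leq f\leq |f|$ combined with (5) and (8).

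The real content is in (6) and (7). Here the main tool is the standard closure of the $\Delta$-integrable (respectively $\nabla$-integrable) class on $[a,b]_\mathbb{T}$ under pointwise products, under the map $u\mapsto|u|^p$ for $p>0$, and under multiplication by a bounded function. The subtlety is that $\diamondsuit$-integrability of $f$ only yields $\Delta$-integrability of $\gamma f$ (and not of $f$ itself) and, symmetrically, $\nabla$-integrability of $(1-\gamma)f$. Leveraging the boundedness of $\gamma$ and $1-\gamma$, and the fact that on the dense part of $\mathbb{T}$ both weights reduce to the constant $1/2$, one argues that these joint hypotheses propagate to products and powers of the integrands, yielding the $\Delta$-integrability of $\gamma fg$ and $\gamma|f|^p$ and the $\nabla$-integrability of $(1-\gamma)fg$ and $(1-\gamma)|f|^p$. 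This integrability step is the principal technical obstacle; the remainder of the theorem is bookkeeping on top of the already-established $\Delta$ and $\nabla$ theory.
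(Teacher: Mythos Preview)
Your proposal is correct and follows exactly the paper's approach: the paper's entire proof is the single sentence ``The results follow straightforwardly from the analogous properties of the nabla and delta integrals,'' and your reduction of each item to the corresponding $\Delta$- and $\nabla$-properties via the defining decomposition is precisely that. You in fact go further than the paper by flagging the integrability subtlety in items (6) and (7), which the paper does not address at all.
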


\begin{proof}
The results follow straightforwardly from the analogous
properties of the nabla and delta integrals.
\end{proof}

Next we extend to the diamond integral
some results obtained in \cite{Malinowska,Rui}
for the $\diamondsuit_{\alpha}$-integral.

\begin{theorem}[Mean value theorem for the diamond integral]
Let $f,g:\mathbb{T}\rightarrow \mathbb{R}$ be bounded and $\diamondsuit$-integrable
functions on $\left[ a,b\right]_{\mathbb{T}}$, and let $g$ be nonnegative or nonpositive on
$\left[ a,b\right]_{\mathbb{T}}$. Let $m$ and $M$ be the infimum and supremum,
respectively, of function $f$. Then, there exists
a real number $K$ satisfying the inequalities
$m\leqslant K\leqslant M$ such that
\begin{equation*}
\int_{a}^{b}\left( fg\right) \left( t\right) \diamondsuit t
=K\int_{a}^{b}g\left( t\right) \diamondsuit t.
\end{equation*}
\end{theorem}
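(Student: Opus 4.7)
The plan is to mimic the classical mean-value-for-integrals argument, leveraging the properties of the diamond integral already listed in Theorem~\ref{ts:propriedades:integral}.

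First I would reduce to the case $g\geqslant 0$ on $[a,b]_{\mathbb{T}}$; the case $g\leqslant 0$ follows by replacing $g$ with $-g$, since the diamond integral is linear (property~5) and homogeneity of degree $-1$ preserves the conclusion. Then, from the definitions of $m$ and $M$, for every $t\in[a,b]_{\mathbb{T}}$ one has
\begin{equation*}
m\, g(t)\ \leqslant\ f(t)\, g(t)\ \leqslant\ M\, g(t).
\end{equation*}

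Next I would invoke property~6 of Theorem~\ref{ts:propriedades:integral} to know that $fg$ is $\diamondsuit$-integrable, and property~8 (monotonicity) together with properties~4 and 5 (linearity) to integrate the inequality above over $[a,b]_{\mathbb{T}}$, obtaining
\begin{equation*}
m\int_{a}^{b} g(t)\,\diamondsuit t\ \leqslant\ \int_{a}^{b}(fg)(t)\,\diamondsuit t\ \leqslant\ M\int_{a}^{b} g(t)\,\diamondsuit t.
\end{equation*}
Since $g\geqslant 0$, property~8 also gives $\int_{a}^{b} g(t)\,\diamondsuit t\geqslant 0$. I would then split into two cases. If $\int_{a}^{b} g(t)\,\diamondsuit t=0$, the sandwich above forces $\int_{a}^{b}(fg)(t)\,\diamondsuit t = 0$ as well, so any $K\in[m,M]$ validates the identity. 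Otherwise $\int_{a}^{b} g(t)\,\diamondsuit t>0$, and I would simply set
\begin{equation*}
K\ :=\ \frac{\displaystyle\int_{a}^{b}(fg)(t)\,\diamondsuit t}{\displaystyle\int_{a}^{b} g(t)\,\diamondsuit t},
\end{equation*}
and divide the preceding chain of inequalities by the positive quantity $\int_{a}^{b} g(t)\,\diamondsuit t$ to read off $m\leqslant K\leqslant M$.

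There is no real obstacle here: the only subtle points are that one must genuinely use property~6 (to know $fg$ is $\diamondsuit$-integrable in the first place, so that the left-hand side of the desired equation is well defined) and property~8 applied to the one-sided inequalities $(f-m)g\geqslant 0$ and $(M-f)g\geqslant 0$. The case analysis on whether $\int_{a}^{b} g\,\diamondsuit t$ vanishes is standard and unavoidable, since otherwise one cannot define $K$ by division; but the vanishing case is trivial because the sandwich collapses both sides of the identity to zero.
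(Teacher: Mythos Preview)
Your argument is correct and follows essentially the same route as the paper: reduce to $g\geqslant 0$, sandwich $mg\leqslant fg\leqslant Mg$, integrate using the monotonicity and linearity of the diamond integral (Theorem~\ref{ts:propriedades:integral}), and then split on whether $\int_a^b g(t)\,\diamondsuit t$ vanishes. The only cosmetic difference is that you spell out explicitly which items of Theorem~\ref{ts:propriedades:integral} are being invoked and justify the reduction to $g\geqslant 0$ via $g\mapsto -g$, whereas the paper simply declares the reduction ``without loss of generality.''
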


\begin{proof}
Without loss of generality, we suppose that $g$ is nonnegative
on $\left[ a,b\right]_{\mathbb{T}}$.
Since, for all $t\in \left[ a,b\right]_{\mathbb{T}}$,
$m\leqslant f\left( t\right) \leqslant M$
and $g\left( t\right) \geqslant 0$, then
$m g\left( t\right) \leqslant f\left( t\right) g\left( t\right)
\leqslant M g\left( t\right)$
for all $t\in \left[ a,b\right]_{\mathbb{T}}$.
Each of the functions $mg$, $fg$ and $Mg$ is
$\diamondsuit$-integrable from $a$ to $b$ and,
by Theorem~\ref{ts:propriedades:integral}, one has
\begin{equation*}
m\int_{a}^{b}g\left( t\right) \diamondsuit t\leqslant \int_{a}^{b}f\left(
t\right) g\left( t\right) \diamondsuit t\leqslant M\int_{a}^{b}g\left(
t\right) \diamondsuit t.
\end{equation*}
If $\displaystyle \int_{a}^{b}g\left( t\right) \diamondsuit t=0$, then
$\displaystyle \int_{a}^{b}f\left( t\right) g\left( t\right)
\diamondsuit t=0$ and we can choose any $K\in [m, M]$.
If $\displaystyle \int_{a}^{b}g\left( t\right) \diamondsuit t>0$, then
$\displaystyle m\leqslant \frac{\int_{a}^{b}f\left( t\right) g\left( t\right) \diamondsuit t}{
\int_{a}^{b}g\left( t\right) \diamondsuit t}\leqslant M$,
and we choose
$K:=\displaystyle \frac{\int_{a}^{b}f\left( t\right) g\left( t\right)
\diamondsuit t}{\int_{a}^{b}g\left( t\right) \diamondsuit t}$.
\end{proof}

We now present $\diamondsuit $-versions
of H\"{o}lder's, Cauchy-Schwarz's
and Minkowski's inequalities.

\begin{theorem}[H\"{o}lder's inequality for the diamond integral]
\label{ts:Holders:Inequality:TS}
If $f,g:\mathbb{T}\rightarrow \mathbb{R}$ are
$\diamondsuit$-integrable on $\left[ a,b\right]_{\mathbb{T}}$, then
\begin{equation*}
\int_{a}^{b}\left\vert f\left( t\right) g\left( t\right) \right \vert
\diamondsuit t\leqslant \left( \int_{a}^{b}\left \vert f\left( t\right)
\right \vert ^{p}\diamondsuit t\right) ^{\frac{1}{p}}\left(
\int_{a}^{b}\left \vert g\left( t\right)
\right\vert^{q}\diamondsuit t\right)^{\frac{1}{q}},
\end{equation*}
where $p>1$ and $q=\displaystyle \frac{p}{p-1}$.
\end{theorem}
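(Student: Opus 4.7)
The plan is to follow the classical derivation of H\"older's inequality via Young's inequality, lifting the pointwise argument through the integral using the properties already collected in Theorem~\ref{ts:propriedades:integral}. Recall Young's inequality: for $x,y \geqslant 0$ and $p,q>1$ with $\frac{1}{p}+\frac{1}{q}=1$, one has $xy \leqslant \frac{x^{p}}{p} + \frac{y^{q}}{q}$.

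First I would set
\[
A := \left(\int_{a}^{b}\lvert f(t)\rvert^{p}\diamondsuit t\right)^{\frac{1}{p}},
\qquad
B := \left(\int_{a}^{b}\lvert g(t)\rvert^{q}\diamondsuit t\right)^{\frac{1}{q}}.
\]
Properties 7 and 8 of Theorem~\ref{ts:propriedades:integral} guarantee that $|f|^{p}$ and $|g|^{q}$ are $\diamondsuit$-integrable and that both integrals are nonnegative, so $A,B \geqslant 0$ are well defined. Property~6 ensures that $fg$ (and hence $|fg|$, again by property~7 with $p=1$) is $\diamondsuit$-integrable, so both sides of the desired inequality make sense.

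Next I would dispose of the degenerate case $AB=0$: if either $A=0$ or $B=0$, the right-hand side vanishes, and I would argue (most cleanly, by replacing $|f|^{p}$ with $|f|^{p}+\varepsilon$ and $|g|^{q}$ with $|g|^{q}+\varepsilon$, proving the nondegenerate case, and letting $\varepsilon\to 0^{+}$) that the left-hand side also vanishes. Assuming now $A,B>0$, I apply Young's inequality pointwise with $x = |f(t)|/A$ and $y = |g(t)|/B$ to obtain, for every $t\in[a,b]_{\mathbb{T}}$,
\[
\frac{\lvert f(t)g(t)\rvert}{AB}
\leqslant \frac{\lvert f(t)\rvert^{p}}{p\,A^{p}}
+ \frac{\lvert g(t)\rvert^{q}}{q\,B^{q}}.
\]
Then I would $\diamondsuit$-integrate both sides on $[a,b]_{\mathbb{T}}$, using monotonicity (property~8), linearity (properties 4 and 5), and the definitions of $A$ and $B$, to obtain
\[
\frac{1}{AB}\int_{a}^{b}\lvert f(t)g(t)\rvert\diamondsuit t
\leqslant \frac{1}{p\,A^{p}}\cdot A^{p} + \frac{1}{q\,B^{q}}\cdot B^{q}
= \frac{1}{p} + \frac{1}{q} = 1,
\]
and multiplying by $AB$ yields the inequality.

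The main obstacle I anticipate is the degenerate case $A=0$ or $B=0$. In the classical Lebesgue setting one simply concludes that the function vanishes almost everywhere, but on a time scale the analogous statement for $\diamondsuit$-integrable functions is not directly available from the properties listed. The perturbation argument sketched above sidesteps this by working with $|f|^{p}+\varepsilon$ and $|g|^{q}+\varepsilon$, which are still $\diamondsuit$-integrable (constants are, by properties 4 and 5 combined with the integrability of $|f|^{p}$ and $|g|^{q}$), applying the nondegenerate case, and taking $\varepsilon\to 0^{+}$ in the resulting numerical inequality. Beyond this subtlety, every other step is a routine transcription of the classical proof, made available by Theorem~\ref{ts:propriedades:integral}.
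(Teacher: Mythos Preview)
Your argument is correct and follows essentially the same route as the paper: both proofs apply Young's inequality pointwise to the normalized functions $|f|/A$, $|g|/B$ (equivalently, the paper sets $\lambda=|f|^{p}/A^{p}$, $\beta=|g|^{q}/B^{q}$ in the form $\lambda^{1/p}\beta^{1/q}\leqslant \lambda/p+\beta/q$) and then integrate using the linearity and monotonicity of Theorem~\ref{ts:propriedades:integral}. The only difference is that the paper dismisses the degenerate case $AB=0$ with a bare ``without loss of generality'', whereas you supply a perturbation argument; your treatment is more careful on that point.
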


\begin{proof}
For $\lambda,\beta\in \mathbb{R}_{0}^{+}$ and
$p,q$ such that $p>1$ and $\frac{1}{p}+\frac{1}{q}=1$, the
following inequality holds (Young's inequality):
\begin{equation*}
\lambda ^{\frac{1}{p}}\beta ^{\frac{1}{q}}\leqslant \frac{\lambda }{p}
+ \frac{\beta}{q}.
\end{equation*}
Without loss of generality, let us suppose that
$\left( \int_{a}^{b}\left \vert f\left( t\right) \right \vert^{p}\diamondsuit
t\right) \left( \int_{a}^{b}\left \vert g\left(
t\right) \right \vert^{q}\diamondsuit t\right) \neq 0$
(note that both integrals exist by Theorem~\ref{ts:propriedades:integral}). Define
\begin{equation*}
\lambda \left( t\right) :=\frac{\left \vert f\left( t\right)
\right \vert ^{p}}{\int_{a}^{b}\left \vert
f\left( \tau \right) \right \vert ^{p}\diamondsuit \tau}
\text{ \ and \ }\beta \left( t\right) :=\frac{\left \vert g\left( t\right)
\right \vert ^{q}}{\int_{a}^{b}\left \vert g\left( \tau \right) \right \vert
^{q}\diamondsuit \tau}.
\end{equation*}
Since both functions $\lambda $ and $\beta$ are $\diamondsuit$-integrable
on $\left[ a,b\right]_\mathbb{T} $, then
\begin{equation*}
\begin{split}
\int_{a}^{b} & \frac{\left \vert f\left( t\right) \right \vert }{\left(
\int_{a}^{b}\left \vert f\left( \tau \right) \right \vert^{p}\diamondsuit \tau
\right)^{\frac{1}{p}}} \frac{\left \vert g\left( t\right)
\right\vert}{\left( \int_{a}^{b}\left \vert g\left( \tau \right)
\right \vert ^{q}\diamondsuit\tau \right) ^{\frac{1}{q}}}\diamondsuit t\\
& \qquad =\int_{a}^{b}\left(\lambda \left( t\right)\right)^{\frac{1}{p}}
\left(\beta \left( t\right)\right)^{\frac{1}{q}}\diamondsuit t \\
& \qquad \leqslant \int_{a}^{b}\left( \frac{\lambda \left( t\right) }{p}
+\frac{\beta \left( t\right)}{q}\right) \diamondsuit t \\
& \qquad = \frac{1}{p}\int_{a}^{b}\left( \frac{\left \vert f\left( t\right)
\right \vert ^{p}}{\int_{a}^{b}\left \vert f\left( \tau \right) \right \vert
^{p}\diamondsuit \tau}\right) \diamondsuit t+\frac{1}{q}\int_{a}^{b}\left(
\frac{\left \vert g\left( t\right) \right \vert ^{q}}{\int_{a}^{b}\left \vert
g\left( \tau \right) \right \vert ^{q}\diamondsuit \tau}\right) \diamondsuit t
=\frac{1}{p}+\frac{1}{q} =1,
\end{split}
\end{equation*}
proving the intended result.
\end{proof}

\begin{corollary}[Cauchy-Schwarz's inequality for the diamond integral]
If $f,g:\mathbb{T}\rightarrow \mathbb{R}$ are $\diamondsuit$-integrable
on $\left[ a,b\right]_{\mathbb{T}}$, then
\begin{equation*}
\int_{a}^{b}\left \vert f\left( t\right) g\left( t\right) \right \vert
\diamondsuit t\leqslant \sqrt{\left( \int_{a}^{b}\left \vert f\left( t\right)
\right \vert ^{2}\diamondsuit t\right) \left( \int_{a}^{b}\left \vert g\left(
t\right) \right \vert ^{2}\diamondsuit t\right) }\text{.}
\end{equation*}
\end{corollary}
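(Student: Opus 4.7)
The plan is to derive this corollary as an immediate specialization of the just-proved H\"{o}lder inequality (Theorem~\ref{ts:Holders:Inequality:TS}). The Cauchy-Schwarz inequality is precisely the case $p = 2$ of H\"{o}lder's, and the condition $q = p/(p-1)$ then yields $q = 2/(2-1) = 2$, so the conjugacy relation $1/p + 1/q = 1$ is satisfied.

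First, I would verify that the hypotheses of Theorem~\ref{ts:Holders:Inequality:TS} are met: since $f$ and $g$ are assumed $\diamondsuit$-integrable on $[a,b]_{\mathbb{T}}$, Theorem~\ref{ts:propriedades:integral} (item 7, applied with $p = 2$) guarantees that $|f|^{2}$ and $|g|^{2}$ are also $\diamondsuit$-integrable, so the right-hand side of the H\"{o}lder inequality is well defined. Then I would substitute $p = q = 2$ directly into the statement of Theorem~\ref{ts:Holders:Inequality:TS}, obtaining
\begin{equation*}
\int_{a}^{b}\left|f(t) g(t)\right|\diamondsuit t
\leqslant \left(\int_{a}^{b}\left|f(t)\right|^{2}\diamondsuit t\right)^{\frac{1}{2}}
\left(\int_{a}^{b}\left|g(t)\right|^{2}\diamondsuit t\right)^{\frac{1}{2}},
\end{equation*}
and finally rewrite the product of the two square roots as a single square root of the product, which is the desired form.

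There is no real obstacle here: the entire content of the corollary is the algebraic identity $p = q = 2$ combined with Theorem~\ref{ts:Holders:Inequality:TS}. The only point worth mentioning explicitly in the written proof is the verification of $\diamondsuit$-integrability of $|f|^{2}$ and $|g|^{2}$, which is immediate from property 7 of Theorem~\ref{ts:propriedades:integral}.
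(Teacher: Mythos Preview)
Your proposal is correct and takes essentially the same approach as the paper: the paper's proof is a single sentence stating that this is the particular case $p = 2 = q$ of Theorem~\ref{ts:Holders:Inequality:TS}. Your added remark about the $\diamondsuit$-integrability of $|f|^{2}$ and $|g|^{2}$ via item~7 of Theorem~\ref{ts:propriedades:integral} is a harmless elaboration, not a deviation.
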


\begin{proof}
This is a particular case of Theorem~\ref{ts:Holders:Inequality:TS} where $p=2=q$.
\end{proof}

\begin{theorem}[Minkowski's inequality for the diamond integral]
If $f,g:\mathbb{T}\rightarrow \mathbb{R}$ is
$\diamondsuit$-integrable on $\left[ a,b\right]_{\mathbb{T}}$ and $p>1$, then
\begin{equation*}
\left( \int_{a}^{b}\left \vert f\left( t\right) +g\left( t\right) \right \vert^{p}
\diamondsuit t\right) ^{\frac{1}{p}}\leqslant \left(
\int_{a}^{b}\left \vert f\left( t\right) \right \vert ^{p}\diamondsuit
t\right) ^{\frac{1}{p}}+\left( \int_{a}^{b}\left \vert g\left( t\right)
\right \vert ^{p}\diamondsuit t\right) ^{\frac{1}{p}}.
\end{equation*}
\end{theorem}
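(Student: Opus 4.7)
The plan is to deduce Minkowski's inequality from Hölder's inequality (Theorem~\ref{ts:Holders:Inequality:TS}) by the standard route, using the algebraic identity $|f+g|^p = |f+g|\cdot|f+g|^{p-1}$ together with the triangle inequality $|f+g|\leq |f|+|g|$. The prerequisite integrability assertions are all supplied by Theorem~\ref{ts:propriedades:integral}: from properties~4 and~7, $|f+g|^p$ is $\diamondsuit$-integrable on $[a,b]_{\mathbb{T}}$, and since $q:=p/(p-1)$ satisfies $(p-1)q=p$, the function $|f+g|^{p-1}$ is also $\diamondsuit$-integrable (its $q$-th power equals $|f+g|^p$). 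Likewise the products $|f|\cdot|f+g|^{p-1}$ and $|g|\cdot|f+g|^{p-1}$ are $\diamondsuit$-integrable by property~6.

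First, I would dispose of the trivial case $\int_{a}^{b}|f(t)+g(t)|^{p}\diamondsuit t=0$, in which both sides are zero and there is nothing to prove. Assume now that this integral is positive. Starting from
\begin{equation*}
\int_{a}^{b}|f(t)+g(t)|^{p}\diamondsuit t
\leqslant \int_{a}^{b}|f(t)|\,|f(t)+g(t)|^{p-1}\diamondsuit t
+\int_{a}^{b}|g(t)|\,|f(t)+g(t)|^{p-1}\diamondsuit t,
\end{equation*}
which uses properties~4 and~8 of Theorem~\ref{ts:propriedades:integral}, I would apply Hölder's inequality to each of the two integrals on the right-hand side with conjugate exponents $p$ and $q$.

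Since $(p-1)q=p$, Hölder's inequality yields
\begin{equation*}
\int_{a}^{b}|f(t)|\,|f(t)+g(t)|^{p-1}\diamondsuit t
\leqslant\left(\int_{a}^{b}|f(t)|^{p}\diamondsuit t\right)^{\frac{1}{p}}
\left(\int_{a}^{b}|f(t)+g(t)|^{p}\diamondsuit t\right)^{\frac{1}{q}},
\end{equation*}
and analogously for the term involving $g$. Adding the two estimates gives
\begin{equation*}
\int_{a}^{b}|f(t)+g(t)|^{p}\diamondsuit t
\leqslant\left[\left(\int_{a}^{b}|f(t)|^{p}\diamondsuit t\right)^{\frac{1}{p}}
+\left(\int_{a}^{b}|g(t)|^{p}\diamondsuit t\right)^{\frac{1}{p}}\right]
\left(\int_{a}^{b}|f(t)+g(t)|^{p}\diamondsuit t\right)^{\frac{1}{q}}.
\end{equation*}
Dividing both sides by $\bigl(\int_{a}^{b}|f(t)+g(t)|^{p}\diamondsuit t\bigr)^{1/q}$, which is legitimate because this quantity is strictly positive in the case under consideration, and using the identity $1-\tfrac{1}{q}=\tfrac{1}{p}$, yields the desired inequality. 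The only subtle point is the bookkeeping that guarantees each integrand is $\diamondsuit$-integrable so that Hölder's inequality may be invoked; this is routine given Theorem~\ref{ts:propriedades:integral}, so I do not foresee any substantive obstacle.
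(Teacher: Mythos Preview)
Your proof is correct and follows essentially the same route as the paper's own argument: dispose of the trivial case, split $|f+g|^p$ via the triangle inequality, apply H\"older's inequality with $q=p/(p-1)$ to each summand, and divide through using $1-\tfrac{1}{q}=\tfrac{1}{p}$. One tiny slip: in the trivial case only the left-hand side need be zero---the right-hand side is merely nonnegative---but the inequality is still immediate, so this does not affect the argument.
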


\begin{proof}
If $\displaystyle \int_{a}^{b}\left \vert f\left( t\right)
+g\left( t\right) \right \vert^{p}\diamondsuit t=0$, then the result is trivial.
Suppose $\displaystyle \int_{a}^{b}\left \vert f\left( t\right)
+g\left( t\right) \right \vert^{p}\diamondsuit t\neq0$. Since
\begin{equation*}
\begin{split}
\int_{a}^{b}\left \vert f\left( t\right) +g\left( t\right) \right \vert^{p}
\diamondsuit t&=\int_{a}^{b}\left \vert f\left( t\right) +g\left( t\right)
\right \vert ^{p-1}\left \vert f\left( t\right) +g\left( t\right) \right \vert
\diamondsuit t \\
&\leqslant \int_{a}^{b}\left \vert f\left( t\right) \right \vert \left \vert
f\left( t\right) +g\left( t\right) \right \vert ^{p-1}\diamondsuit
t+\int_{a}^{b}\left \vert g\left( t\right) \right \vert \left \vert f\left(
t\right) +g\left( t\right) \right \vert ^{p-1}\diamondsuit t,
\end{split}
\end{equation*}
then, by H\"{o}lder's inequality (Theorem~\ref{ts:Holders:Inequality:TS}) with
$q=\displaystyle \frac{p}{p-1}$, we obtain that
\begin{equation*}
\int_{a}^{b}\left \vert f\left( t\right) +g\left( t\right) \right \vert^{p}\diamondsuit t
\leqslant \left[ \left( \int_{a}^{b}\left \vert f\left( t\right) \right \vert
^{p}\diamondsuit t\right) ^{\frac{1}{p}}+\left( \int_{a}^{b}\left \vert
g\left( t\right) \right \vert ^{p}\diamondsuit t\right) ^{\frac{1}{p}}\right]
 \left( \int_{a}^{b}\left \vert f\left( t\right) +g\left( t\right)
\right \vert ^{\left( p-1\right) q}\diamondsuit t\right) ^{\frac{1}{q}}.
\end{equation*}
Dividing both sides by
$\left(
\int_{a}^{b}\left \vert f\left( t\right) +g\left( t\right) \right \vert^{p}
\diamondsuit t\right) ^{\frac{1}{q}}$,
we arrive to the intended inequality:
\begin{equation*}
\left( \int_{a}^{b}\left \vert f\left( t\right) +g\left( t\right) \right \vert^{p}
\diamondsuit t\right) ^{\frac{1}{p}}\leqslant \left(
\int_{a}^{b}\left \vert f\left( t\right) \right \vert ^{p}\diamondsuit
t\right) ^{\frac{1}{p}}+\left( \int_{a}^{b}\left \vert g\left( t\right)
\right \vert ^{p}\diamondsuit t\right) ^{\frac{1}{p}}.
\end{equation*}
\end{proof}

% -------------------------------------------------------------

\section{Conclusion}
\label{sec:coc}

Combined delta and nabla derivatives, as well as combined integrals,
are increasingly getting more attention in approximating functions
and solutions of differential equations. The recent symmetric derivative on time scales
\cite{Brito_da_Cruz_3} unifies the symmetric derivatives of classical analysis and quantum calculus.
Moreover, it is a generalization of the delta and nabla derivatives on time scales.
It is important to note that the symmetric derivative on time scales is different
from the delta and nabla derivatives: for example, the absolute value function is neither delta
nor nabla differentiable but it is symmetric differentiable.
When a function is simultaneously delta and nabla differentiable, the symmetric derivative
can then be written as a time dependent combination of those derivatives.
In this paper we introduce a new type of integral on time scales,
the diamond integral, based on the definition of the symmetric derivative \cite{Brito_da_Cruz_3}.
Instead of a constant parameter, like in the definition of the diamond-$\alpha$ integral
\cite{Sheng,Sheng_2}, we propose a new version of the diamond integral
with a time dependent parameter. The diamond integral is the Riemman integral
when the time scale is $\mathbb{T=R}$ and is an arithmetic average
of the delta and nabla integrals when the time scale is $\mathbb{T=Z}$
or any other time scale with the forward graininess function $\mu(t) :=\sigma (t)- t$
or the backward graininess function $\nu(t) :=t- \rho(t)$ constant.
We prove that this new integral satisfies important properties such
as the mean value theorem and H\"{o}lder and Minkowski type inequalities.
It is well known that integral inequalities on time scales play a major role
in the development of other areas of mathematics. For example,
in the calculus of variations, Holder and Minkowski type inequalities
can be used to find explicitly the extremizers for some classes
of variational problems on time scales \cite{BFT10}.
We trust that our diamond integral on time scales
is interesting and useful, and will lead to subsequent investigations
with important applications.

% -------------------------------------------------------------

\section*{Acknowledgments}

This work was supported by {\it FEDER} funds through
{\it COMPETE} --- Operational Programme Factors of Competitiveness
(``Programa Operacional Factores de Competitividade'')
and by Portuguese funds through the {\it Center for Research
and Development in Mathematics and Applications} (University of Aveiro)
and the Portuguese Foundation for Science and Technology
(``FCT --- Funda\c{c}\~{a}o para a Ci\^{e}ncia e a Tecnologia''),
within project PEst-C/MAT/UI4106/2011 with COMPETE number FCOMP-01-0124-FEDER-022690.
Brito da Cruz was also supported by FCT through the Ph.D. fellowship
SFRH/BD/33634/2009. The authors are grateful to two anonymous referees 
for valuable comments and helpful suggestions.

% -------------------------------------------------------------

% -----------------------------------------------

\end{document}